\providecommand{\stl}{\psset{linewidth=0.9pt}} 
\newlength\figureheight	
\newlength\figurewidth	
\newtheorem{theorem}{Theorem}
\newtheorem{lemma}[theorem]{Lemma}
\newtheorem{proposition}[theorem]{Proposition}
\newtheorem{remark}[theorem]{Remark}
\newtheorem{assumption}[theorem]{Assumption}
\providecommand{\ef}{\;.}
\providecommand{\ec}{\;,}
\providecommand{\fa}{\forall\;}         
\providecommand{\eqdef}{\triangleq}     
\newcommand{\BN}{\mathbb{N}}        
\newcommand{\BZ}{\mathbb{Z}}        
\newcommand{\BR}{\mathbb{R}}        
\definecolor{ULOcean}{RGB}{0,75,90}
\definecolor{ULOrange}{RGB}{236,116,4}
\definecolor{LiteRed}{RGB}{255,206,206}
\definecolor{MedRed}{RGB}{204,24,24}
\definecolor{DarkRed}{RGB}{127,15,15}
\definecolor{LiteYellow}{RGB}{255,255,100}
\definecolor{MedYellow}{RGB}{255,255,25}
\definecolor{DarkYellow}{RGB}{225,225,0}
\definecolor{LiteBlue}{RGB}{170,184,255}
\definecolor{MedBlue}{RGB}{10,20,204}
\definecolor{DarkBlue}{RGB}{27,29,120}
\definecolor{LiteGreen}{RGB}{24,204,75}
\definecolor{MedGreen}{RGB}{0,130,37}
\definecolor{DarkGreen}{RGB}{0,61,17}
\definecolor{LiteViolet}{RGB}{110,0,225}
\definecolor{MedViolet}{RGB}{73,0,148}
\definecolor{DarkViolet}{RGB}{53,0,107}
\definecolor{LiteOrange}{RGB}{255,226,117}
\definecolor{MedOrange}{RGB}{255,206,0}
\definecolor{DarkOrange}{RGB}{204,167,10}
\definecolor{LiteGray}{RGB}{230,230,230}
\definecolor{MedGray}{RGB}{160,160,160}
\definecolor{DarkGray}{RGB}{100,100,100}
\title{\LARGE \bf

Contingency Model-based Control (CMC) for Communicationless Cooperative Collision Avoidance in Robot Swarms 
}
\author{Georg Schildbach$^{*}$
\thanks{$^{*}$Autonomous Systems Laboratory, University of L{\"u}beck, L{\"u}beck, Germany (email: {\tt\small georg.schildbach@uni-luebeck.de}).}
}
\begin{document}

\maketitle
\thispagestyle{empty}
\pagestyle{empty}


\begin{abstract}
  Cooperative collision avoidance between robots, or `agents,' in swarm operations remains an open challenge. Assuming a decentralized architecture, each agent is responsible for making its own decisions and choosing its control actions. Most existing approaches rely on a (wireless) communication network between (some of) the agents. In reality, however, communication is brittle. It may be affected by latency, further delays and packet losses, and transmission faults. Moreover, it is subject to adversarial attacks, such as jamming or spoofing. This paper proposes Contingency Model-based Control (CMC), a decentralized cooperative approach that does not rely on communication. Instead, the control algorithm is based on consensual rules that are designed for all agents offline, similar to traffic rules. For CMC, this includes the definition of a contingency trajectory for each robot, and perpendicular bisecting planes as collision avoidance constraints. The setup permits a full guarantee of recursive feasibility and collision avoidance between all swarm members in closed-loop operation. CMC naturally satisfies the plug \& play paradigm, i.e., new robots may enter the swarm dynamically. The effectiveness of the CMC regime is demonstrated in two numerical examples, showing that the collision avoidance guarantee is intact and the robot swarm operates smoothly in a constrained environment.
\end{abstract}

\vspace*{0.3cm}
\section{INTRODUCTION}\label{Sec:Intro}

Swarm robotics is inspired by the behavior of biological systems, such as animals operating in swarms, flocks, or packs. Similarly, robots are expected to collaborate in technical applications, using simple, local interaction rules \cite{Hamann:2018}. A robot swarm may thus become more capable, adaptive, (cost) efficient, and resilient than a single robot. However, the control of the individual robots, which includes motion planning in the context of this paper, represents a major challenge. One of the most critical aspects is collision avoidance between the robots, in particular, if the robots operate in dense environments and/or at high speeds.

\subsection{Centralized Approaches} 

Centralized approaches collect relevant information from all the robots, or `agents,' and issue individual provisions on their control actions. The global optimal control problem can be formulated and solved, e.g., based on graph search \cite{SvestkaOvermars:1998,PreissEtAl:2017}, mixed integer programming \cite{SchouwenaarsEtAl:2001}, sequential convex programming \cite{AugugliaroEtAl:2012}, or dynamic programming \cite{StipanovicEtAl:2007}. The key advantage of centralized approaches is the performance level that may be achieved if a centralized decision maker weights the individual preferences for the sake of a collective goal of the swarm. The main drawback is the requirement of a centralized decision maker, which must be alive and accessible by all agents. Moreover, it must possess sufficient computational power in order to solve the optimal control problem for all agents.

\subsection{Decentralized Approaches} 

Decentralized approaches eliminate the need for a central decision maker. The task of controlling the swarm is distributed between the agents, each of them solving an individual (optimal) control problem. Different schemes have been proposed for the iterations between these individual control problems. They may be distinguished into sequential, iterative, and synchronous approaches \cite{VanParysPipeleers:2017}. A common feature is that collision avoidance is achieved by communication between (some of) the agents, exchanging their planned trajectories \cite{ChalousEtAl:2010,ChengEtAl:2017,LuisSchoellig:2019,GraefeEtAl:2025}, dual variables \cite{OngGerdes:2015,SaccaniFagiano:2021,CarronEtAl:2023}, or pairwise constraints \cite{BhattacharyaEtAl:2011,VanParysPipeleers:2017}. 

In exchange for a distribution of the computational effort, decentralized approaches feature an increased amount of communication compared to centralized approaches. To reduce the communication volume, additional constraints or restricted action sets may be introduced for the agents, e.g., using safe corridors \cite{ToumiehFloreano:2024} or polyhedral hulls for trajectory intervals \cite{TordesillasHow:2021}. However, this may lead to a sub-optimal performance, often termed as `conservatism.' In consequence, decentralized approaches may exhibit a poor scaling with the number of robots and/or a loss of formal safety guarantees. A further difficulty may arise from dynamic changes in the swarm configuration, i.e., a robot entering (or leaving) the swarm during its operation. This feature, also known as plug \& play control \cite{Stoustrup:2009}, has been specifically addressed in recent work \cite{SaccaniFagiano:2021,CarronEtAl:2023}. 

\subsection{Decentralized Approaches without Communication} 

The key problem of decentralized approaches, as discussed above, remains their reliance on a (wireless) communication network. In fact, communication is always brittle in a real environment. It is inherently affected by (regular) latency, (irregular) delays and losses of packets, and/or the transmission of faulty or noisy data. Moreover, wireless communication is susceptible to adversarial attacks, such as jamming or spoofing. For many practical applications, it is therefore advantageous if essential features related to safety, such as collision avoidance, work entirely without communication. 

Decentralized approaches without communication thus aim to design the individual control problem in such a way that each robot may rely only on its onboard sensors for outside information. To this end, a predefined set of universal rules is incorporated into the design of the control algorithm. A illustrative analogy is road traffic, where the communication between traffic participants is almost non-existent---with the exception of a few visual and acoustic signals which, however, are not essential at all. Despite the lack of communication, vehicles and other traffic participants are passing each other safely, even closely and at high speeds, because everyone adheres to a universal set of (traffic) rules.

In the context of swarm robotics, this approach has also been termed `implicit cooperation' \cite{AbeYoshiki:2001}. Multiple types of algorithms have been proposed in this context. Potential fields or barrier functions, for example, may be used as a repulsion force between the agents of swarm \cite{ChangEtAl:2003,RezaeeAbdollahi:2014}. However, even if they adapt dynamically, their tuning leads to a trade-off between performance and safety, and there are no formal guarantees for collision avoidance. Another possibility is the use of iterative learning control \cite{ZhuEtAl:2020}. It has been shown to work well on repetitive tasks, and it may also be suitable for new tasks if the setups are sufficiently similar. Clearly, there are also no formal safety guarantees. 

A further class of approaches is based on the concept of velocity obstacles (VOs) \cite{FioriniShiller:1998}. VOs prevent collisions under the assumption of a constant velocity for the other robots. They may be imposed on the entire swarm, by sharing the velocity restrictions equally between all robots \cite{VanDenBergEtAl:2009}. This leads to an approach known as Optimal Reciprocal Collision Avoidance (ORCA). It has been improved to include acceleration-velocity obstacles \cite{VanDenBergEtAl:2011} and generalized to non-holonomic robots \cite{AlonsoMoraEtAl:2013}. ORCA comes with a guarantee of collision avoidance, albeit only for a finite amount of time. Another drawback is that ORCA lacks the integration of motion planning to follow more complex objectives, i.e., other than moving along a straight line. For this purpose, the VOs have been integrated into a motion planning framework using Model Predictive Controller (MPC) \cite{ChengEtAl:2017}. The resulting approach works well in practice \cite{KratkyEtAl:2025}. However, formal guarantees of recursive feasibility and collision avoidance critically depend on information about the future trajectories of the other agents, thus relying again on communication.

\subsection{New Contribution} 

This paper presents a new approach for cooperative collision avoidance in robot swarms, called Contingency Model-based Control (CMC). It is based implicit cooperation paradigm, i.e., it relies on a set of consensual rules and does not require any communication between the agents. It is inspired by the combination of ORCA with MPC \cite{ChengEtAl:2017,KratkyEtAl:2025}. In contrast to the existing approach, the collision avoidance constraints are enforced (only) for a contingency trajectory. In this sense, CMC leverages the idea of multi-trajectory predictions \cite{SaccaniFagiano:2021,CarronEtAl:2023}. The contingency trajectory is completely defined by the current position and speed of each agent. It can thus be constructed by each agent for all other agents using only its onboard sensor measurements. The contingency trajectory is used to set up the mutual collision avoidance constraints based on a perpendicular bisecting plane, tightened by the robot size. It can be shown, mathematically, that this combination permits a guarantee of recursive feasibility and collision avoidance between the robots under the CMC regime in closed loop.



\section{NOTATION}

$\mathbb{R}$ and $\mathbb{Z}$ denote the sets of real and integral numbers. $\mathbb{R}_{+}$ ($\mathbb{R}_{0+}$) and $\mathbb{Z}_{+}$ ($\mathbb{Z}_{0+}$) are the sets of positive (non-negative) real and integral numbers, respectively. For any $r\in\BR$, $\lceil r\rceil$ is the smallest integral number larger than $r$ and $\lfloor r\rfloor$ is the largest integral number lower than $r$.

Bold lowercase and uppercase letters denote vectors and matrices, such as $\mathbf{v}\in\BR^{n}$ and $\mathbf{M}\in\BR^{m\times n}$. For a (column) vector $\mathbf{v}\in\BR^{n}$, $\mathbf{v}^{\mathrm{T}}$ represents its transpose, and $\|\mathbf{v}\|_{2}$ its Euclidean norm. $\mathbf{0}_{n\times m}$ is the zero matrix of dimension $n\times m$ and $\mathbf{I}_{n\times n}$ is the identity matrix of dimension $n\times n$.

\section{COOPERATIVE SWARM CONTROL PROBLEM}\label{Sec:Problem}

Consider a swarm of $m=1,2,\dots,M$ autonomous robots, each following the linear (double integrator) dynamics
\begin{equation}\label{Equ:DTSystem}
  \mathbf{s}^{(m)}_{k+1}=\mathbf{A}\mathbf{s}^{(m)}_{k}+\mathbf{B}\mathbf{a}^{(m)}_{k}
\end{equation}
in discrete time $k\in\mathbb{Z}_{0+}$, for a sampling time $\Delta t\in\BR_{+}$. The state vector $\mathbf{s}^{(m)}_{k}\in\mathbb{R}^{6}$ of agent $m$ in time step $k$ comprises of the 3-dimensional position and velocity vectors, using the notation
\begin{subequations}\label{Equ:StateVector}
\begin{equation}\label{Equ:StateVector1}
  \mathbf{s}^{(m)}_{k}=
  \begin{bmatrix}
  	\mathbf{p}^{(m)}_{k}\\
	\mathbf{v}^{(m)}_{k}
  \end{bmatrix}\ec
\end{equation}\vspace*{-0.3cm}

\noindent and\vspace*{0.1cm}
\begin{equation}\label{Equ:StateVector2}
  \mathbf{p}^{(m)}_{k}=
  \begin{bmatrix}
  	x_{k}^{(m)}\\
	y_{k}^{(m)}\\
	z_{k}^{(m)}
  \end{bmatrix}\ec\quad
  \mathbf{v}^{(m)}_{k}=
  \begin{bmatrix}
  	v_{\mathrm{x},k}^{(m)}\\
	v_{\mathrm{y},k}^{(m)}\\
	v_{\mathrm{z},k}^{(m)}
  \end{bmatrix}\ef
\end{equation}
\end{subequations}
The input vector $\mathbf{a}^{(m)}_{k}\in\mathbb{R}^{3}$ of agent $m$ in time step $k$ consists of the three spacial accelerations
\begin{equation}\label{Equ:InputVector}
  \mathbf{a}^{(m)}_{k}=
  \begin{bmatrix}
  	a_{\mathrm{x},k}^{(m)}\\
	a_{\mathrm{y},k}^{(m)}\\
	a_{\mathrm{z},k}^{(m)}
  \end{bmatrix}\ef
\end{equation}
Each agent $m$ starts from an initial position with an initial velocity, i.e., an initial state $\mathbf{s}^{(m)}_{0}\in\mathbb{R}^{6}$.

\begin{assumption}[\textbf{Dynamics}]\label{The:Dynamics}
	(a) The system matrix $\mathbf{A}\in\BR^{6\times 6}$ and input matrix $\mathbf{B}\in\BR^{6\times 3}$ are given by independent double integrator dynamics in each spacial direction:
	\begin{equation*}
		\mathbf{A}\eqdef
		\begin{bmatrix}
			\mathbf{I}_{3\times 3} & \Delta t\mathbf{I}_{3\times 3}\\
			\mathbf{0}_{3\times 3} & \mathbf{I}_{3\times 3}
		\end{bmatrix}\qquad\text{and}\qquad
		\mathbf{B}\eqdef
		\begin{bmatrix}
			\frac{\left(\Delta t\right)^2}{2}\mathbf{I}_{3\times 3}\\
			\Delta t\mathbf{I}_{3\times 3}\ef
		\end{bmatrix}
	\end{equation*}
	(b) The system model is known without uncertainty and there are no external disturbances to the system.
\end{assumption}

\begin{assumption}[\textbf{Dimensions}]\label{The:AgentDim}
  Each agent $m$ is fully contained inside a sphere with radius $\rho\in\BR_{+}$, centered in its current position. For collision avoidance, there may be no overlap between the individual spheres.
\end{assumption}

The acceleration vectors of all agents $m$ are constrained by
\begin{equation}\label{Equ:AccConstraints}
	a^{(m)}_{k}=\left\|\mathbf{a}^{(m)}_{k}\right\|_{2}\leq \bar{a}\qquad\fa k\in\BZ_{0+}\ec
\end{equation}
where $\bar{a}\in\BR_{+}$ denotes the uniform acceleration bound. The notation $a^{(m)}_{k}$ represents the magnitude of $\mathbf{a}^{(m)}_{k}$. Similarly, the velocity of agent $m$ is constrained by
\begin{equation}\label{Equ:VelConstraints}
	v^{(m)}_{k}=\left\|\mathbf{v}^{(m)}_{k}\right\|_{2}\leq \bar{v}\qquad\fa k\in\BZ_{0+}\ec
\end{equation}
for some $\bar{v}\in\BR_{+}$. A velocity bound, however, is not strictly necessary. Each agent of the swarm follows an individual objective, and must remain inside an individual state constraint set, including static obstacles:
\begin{equation}\label{Equ:StateConstraints}
	\mathbf{p}^{(m)}_{k}\in\mathbb{S}^{(m)}\subseteq\BR^{3}\ef
\end{equation}

\begin{assumption}[\textbf{Swarm}]\label{The:Swarm}
  (a) The dynamics \eqref{Equ:DTSystem}, the dynamic capabilities \eqref{Equ:AccConstraints},\eqref{Equ:VelConstraints}, and the agent size $\rho$ are identical for and known to all agents. (b) In every time step $k$, each agent $m$ can measure its own state and that of all other agents $j=1,2,\dots,M$, $j\neq m$ precisely. In particular, there is no measurement noise. (c) All agents share the same sampling time $\Delta t$ and operate on a synchronized clock. 
\end{assumption}

\begin{assumption}[\textbf{Communication}]\label{The:Communication}
  During the swarm operation, there is no communication between the agents. In particular, the agents are unable to share any information about their individual objective or position constraints \eqref{Equ:StateConstraints}, or to coordinate their motion plans, e.g., via Lagrange multipliers or a cooperative assignment of safe regions.
\end{assumption}

The goal of CMC is to establish a framework for the swarm that ensures persistent constraint satisfaction and collision avoidance between the robots. The framework consists of a set of cooperative rules that the agents of the swarm agree upon offline, i.e., before the actual operation.

\section{CONTINGENCY PLAN AND COLLISION AVOIDANCE CONSTRAINTS}\label{Sec:CoCoDefinition}

The CMC algorithm is based on the concept of a contingency trajectory, which is used, in a subsequent step, to set up the collision avoidance constraints. This Section provides their detailed derivation.

\subsection{Contingency Plan}\label{Sec:ContPlan}

Let $N\in\BN$ be the prediction horizon. In order to maintain recursive feasibility, a \emph{contingency trajectory} is defined for each agent. It is based on a \emph{contingency horizon}, which is defined as the smallest number of steps in which the agent can reach a stopping position: 
\begin{equation}\label{Equ:ContingencyHorizon}
	\tilde{N}_{k}^{(m)}\eqdef\left\lceil\frac{v_{k}^{(m)}}{\bar{a}\Delta t}\right\rceil\ef
\end{equation}
Here $\lceil*\rceil$ denotes the smallest integral number that is greater than $*$. Obviously, the contingency horizon is upper bounded by
  \begin{equation}\label{Equ:MaxContingencyHorizon}
	\tilde{N}_{k}^{(m)}\leq\tilde{N}\eqdef\left\lceil\frac{\bar{v}}{\bar{a}\Delta t}\right\rceil\ef
\end{equation}
In the remainder of the paper, it is assumed that the maximum contingency horizon is strictly smaller than the prediction horizon, i.e., $\tilde{N}\leq N-1$. This assumption could be relaxed without major implications, other than notational inconveniences.

\begin{remark}[\textbf{Contingency horizon}]
  (a) The contingency horizon is time-varying. (b) In time step $k$, it is determined only by the velocity $\mathbf{v}_{k}^{(m)}$ of agent $m$ and static information.
\end{remark}

The \emph{contingency trajectory} is defined by a constant deceleration pattern over the contingency horizon; see Figure \ref{Fig:ContingencyPlan} for an illustration. The deceleration is chosen such that the direction of motion is preserved, i.e.,
\begin{subequations}
\begin{equation}\label{Equ:ContingencyAcceleration1}
	\mathbf{\tilde{a}}_{k}^{(m)}\eqdef \tilde{a}_{k}^{(m)}\frac{\mathbf{v}_{k}^{(m)}}{v_{k}^{(m)}}\ef
\end{equation}
Here $\tilde{a}_{k}^{(m)}\in\BR$ is defined as the signed (scalar) acceleration in the direction of the velocity $\mathbf{v}_{k}^{(m)}$. In order to reach a stopping position, this signed acceleration must by chosen as
\begin{equation}\label{Equ:ContingencyAcceleration2}
	\tilde{a}_{k}^{(m)}=-\frac{v_{k}^{(m)}}{\tilde{N}_{k}^{(m)}\Delta t}
\end{equation}
\end{subequations}
over the subsequent $\tilde{N}_{k}^{(m)}$ time steps, and equal to $0$ thereafter. Combining \eqref{Equ:ContingencyAcceleration1} and \eqref{Equ:ContingencyAcceleration2} yields
\begin{equation}\label{Equ:ContingencyAcceleration}
	\mathbf{\tilde{a}}_{i|k}^{(m)}=
	\left\{\begin{array}{ll}
	-\frac{\mathbf{v}_{k}^{(m)}}{\tilde{N}_{k}^{(m)}\Delta t}& \text{for}\;0\leq i\leq \tilde{N}_{k}^{(m)}-1\\
	\mathbf{0}_{3} & \text{for}\;i\geq\tilde{N}_{k}^{(m)}
	\end{array}\right.\ef
\end{equation}
Here, and in the following, the double index `$i|k$' is used to denote the $i$-step prediction or planning step made at time $k$.

The (negative) acceleration in \eqref{Equ:ContingencyAcceleration} is always feasible, in the sense that its magnitude is less than $\bar{a}$. This follows immediately from the definition of the contingency horizon \eqref{Equ:ContingencyHorizon}.

In summary, the \emph{contingency trajectory} is such that the agent follows a straight path along its initial direction of motion, with a linearly decreasing velocity to a standstill.


\begin{figure}[H]
    \vspace*{-0.2cm}
    \includegraphics[scale=1]{./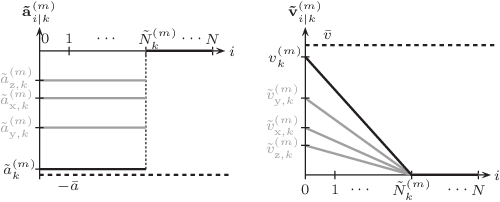} 
	\caption{Illustration of the acceleration profile (left) and the velocity profile (right) of the contingency trajectory for agent $m$ in step $k$.\label{Fig:ContingencyPlan}}
\end{figure}

\begin{remark}[\textbf{Contingency plan}]\label{Rem:ContingencyPlan}
  (a) The contingency plan for each agent $m$ consists of a contingency horizon and a contingency trajectory. (b) Both are completely determined by the measured state $\mathbf{s}_{k}^{(m)}$ and do not contain any further decision-making element. (c) Hence the contingency plan can be computed by each agent $m$ for itself, and all other agents $j=1,2,\dots,M$, $j\neq m$, based on Assumptions \ref{The:Dynamics},\ref{The:Swarm}. 
\end{remark}

\subsection{Collision Avoidance Constraints}\label{Sec:CollisionAvoidance}

Collision avoidance constraints are introduced pairwise, between every two agents $m$ and $j$. For the remainder of this section, assume the perspective of a fixed agent $m$, and consider the setup of the collision avoidance constraints with respect to another agent $j$.

The setup is based on the contingency trajectories of agents $m$ and $j$. According to Remark \ref{Rem:ContingencyPlan}, both of them can be constructed independently by (each) agent $m$ for (every other) agent $j$. Consider a fixed time step $k$. The (predicted) positions of the contingency trajectory by agent $m$ for itself are denoted
\begin{equation}
	\mathbf{\tilde{p}}_{i|k}^{(m)}\qquad\fa i=0,1,\dots,\tilde{N}\ec
\end{equation}
where $\mathbf{\tilde{p}}_{0|k}^{(m)}=\mathbf{p}_{k}^{(m)}$. Equivalently, the contingency positions for agent $j$ are denoted
\begin{equation}
	\mathbf{\tilde{p}}_{i|k}^{(j)}\qquad\fa i=0,1,\dots,\tilde{N}\ec
\end{equation}
where $\mathbf{\tilde{p}}_{0|k}^{(j)}=\mathbf{p}_{k}^{(j)}$.


The setup of collision avoidance constraints involves a few simple geometric computations. Figure \ref{Fig:CollAvoidConstraint} provides an illustration. The goal for agent $m$ is not to pass over the perpendicular bisecting plane of the direct connection between the contingency positions of agents $m$ and $j$, in each prediction step $i=1,2,\dots,\tilde{N}$. To this end, the perpendicular bisecting plane has to be tightened by the size of the robot, $\rho$.

The first step is to compute the (normalized) vector pointing from the contingency position of agent $m$ to that of agent $j$:
\begin{subequations}
\begin{equation}
	\mathbf{g}_{i|k}^{(j,m)}\eqdef\frac{\mathbf{\tilde{p}}_{i|k}^{(j)}-\mathbf{\tilde{p}}_{i|k}^{(m)}}{d_{i|k}^{(j,m)}}\ec
\end{equation}
where
\begin{equation}\label{Equ:AgentDistances}
	d_{i|k}^{(j,m)}\eqdef\left\|\mathbf{\tilde{p}}_{i|k}^{(j)}-\mathbf{\tilde{p}}_{i|k}^{(m)}\right\|_2
\end{equation}
\end{subequations}
is the distance between the two agents. The second step is to compute the tightening,
\begin{equation}
	h_{i|k}^{(j,m)}\eqdef\mathbf{g}_{i|k}^{(j,m)\mathrm{T}}\mathbf{\tilde{p}}_{i|k}^{(m)}+\left(\frac{1}{2}d_{i|k}^{(j,m)}-\rho\right)\ef
\end{equation}
So the final collision avoidance constraint reads as
\begin{equation}\label{Equ:CollAvoidConstraints}
	\mathbf{g}_{i|k}^{(j,m)\mathrm{T}}
	\begin{bmatrix}
	  x\\
	  y\\
	  z
	\end{bmatrix}\leq
	h_{i|k}^{(j,m)}\ec
\end{equation}
for all $i=1,2,\dots,\tilde{N}$.

\begin{figure}[H]
	\centering
    \includegraphics[scale=1]{./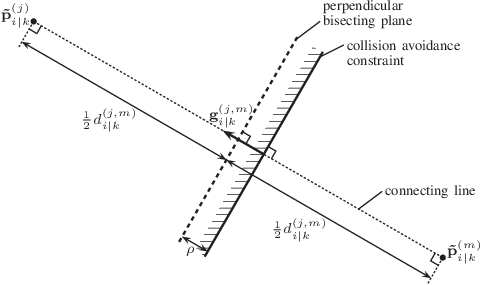} 
	\caption{Construction of the collision avoidance constraint for agent $m$, with respect to agent $j$, in the prediction step $i$.\label{Fig:CollAvoidConstraint}}
\end{figure}

\begin{remark}[\textbf{Collision avoidance constraints}]
  In time step $k$, there is exactly one linear collision avoidance constraint for agent $m$ with respect to each other agent $j$ and in each prediction step $i=1,2,\dots,\tilde{N}$.
\end{remark}

\section{COOPERATIVE MODEL-BASED CONTROL (CMC)}\label{Sec:CMCSetup}

The proposed CMC algorithm involves an individual Finite-time Optimal Control Problem (FTOCP), to be solved by each agent $m$ in each time step $k$. This setup of the FTOCP is described in this section.

\subsection{Nominal Predictions}

The decision vector for agent $m$ in step $k$ includes the (nominal) predicted control inputs
\begin{equation}\label{Equ:PredictedInputs}
	\mathbf{a}_{0|k}^{(m)}\ec\quad\mathbf{a}_{1|k}^{(m)}\ec\quad\dots\ec\quad
	\mathbf{a}_{N-1|k}^{(m)}\ec
\end{equation}
each of which must satisfy the input constraint \eqref{Equ:AccConstraints}. The corresponding (nominal) predicted state sequence is denoted
\begin{equation}\label{Equ:PredictedStates}
	\mathbf{s}_{0|k}^{(m)}\ec\quad\mathbf{s}_{1|k}^{(m)}\ec\quad\dots\ec\quad
	\mathbf{s}_{N|k}^{(m)}\ef
\end{equation}
It starts at the measured state $\mathbf{s}_{0|k}^{(m)}=\mathbf{s}_{k}^{(m)}$. The predicted states can be expressed in terms of the measured state and the predicted control inputs:
\begin{equation}\label{Equ:NominalStatePredictions}
	\mathbf{s}_{i|k}^{(m)}=\mathbf{A}^i\mathbf{s}_{k}^{(m)}+\sum_{l=0}^{i-1}\mathbf{A}^{i-l-1}\mathbf{B}\mathbf{a}_{l|k}^{(m)}\ec
\end{equation}
using the linear dynamics \eqref{Equ:DTSystem}. 

\subsection{Contingency Predictions}

According to the receding-horizon scheme of CMC, only the first of the predicted control inputs, $\mathbf{a}_{0|k}^{(m)}$, is applied to the system. For recursive feasibility of the scheme, it must be ensured that a feasible contingency trajectory is also available in the next time step, i.e., starting from the state
\begin{equation}\label{Equ:OneStepState}
	\mathbf{s}_{1|k}^{(m)}\eqdef
	\begin{bmatrix}
		\mathbf{p}_{1|k}^{(m)}\\
		\mathbf{v}_{1|k}^{(m)}
	\end{bmatrix}\ef
\end{equation}

According to Section \ref{Sec:ContPlan}, this contingency trajectory is fully determined by the predicted position $\mathbf{p}_{1|k}^{(m)}$ and the predicted velocity $\mathbf{v}_{1|k}^{(m)}$. It is defined by a constant deceleration, to a standstill, along a straight path along the direction of $\mathbf{v}_{1|k}^{(m)}$. If the new contingency horizon $\tilde{N}_{k+1}^{(m)}$ were known, then all contingency inputs could simply be calculated as
\begin{equation}\label{Equ:ContingencyInputs}
	\mathbf{\tilde{a}}_{i|k}^{(m)}=
	\left\{\begin{array}{ll}
	-\frac{\mathbf{v}_{1|k}^{(m)}}{\tilde{N}_{k+1}^{(m)}\Delta t}& \text{for}\;1\leq i\leq \tilde{N}_{k+1}^{(m)}\\
	\qquad\mathbf{0}_{3} & \text{for}\;\tilde{N}_{k+1}^{(m)}<i\leq \tilde{N}
	\end{array}\right.\ef
\end{equation}


Here $\mathbf{\tilde{a}}_{i|k}^{(m)}$ denotes the contingency control input for the prediction step $i=1,2,\dots,\tilde{N}$. The tilde is generally used to denote variables related to the contingency trajectory. However, the new contingency horizon $\tilde{N}_{k+1}^{(m)}$ is unknown in step $k$, as it may differ from $\tilde{N}_{k}^{(m)}$.

\begin{proposition}[Change of the contingency horizon]
  The contingency horizon in step $k+1$ satisfies
  \begin{subequations}\label{Equ:ChangeContHorizon}
  \begin{equation}\label{Equ:ChangeContHorizon1}
	\tilde{N}_{k+1}^{(m)}\in\left\{\tilde{N}_{k}^{(m)}\hspace*{-0.1cm}-\hspace*{-0.05cm}1\,,\,\tilde{N}_{k}^{(m)}\,,\,\tilde{N}_{k}^{(m)}\hspace*{-0.1cm}+\hspace*{-0.05cm}1\right\}
  \end{equation}
  and
  \begin{equation}\label{Equ:ChangeContHorizon2}
	0\leq\tilde{N}_{k+1}^{(m)}\leq\tilde{N}\ef
  \end{equation}
  \end{subequations}
\end{proposition}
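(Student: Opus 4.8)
The plan is to reduce the entire statement to a single bound on how much the speed $v_k^{(m)}$ can change in one step, and then to push that bound through the ceiling operation in \eqref{Equ:ContingencyHorizon}. First I would extract the velocity part of the dynamics \eqref{Equ:DTSystem} using the explicit matrices of Assumption \ref{The:Dynamics}(a), which give $\mathbf{v}_{k+1}^{(m)}=\mathbf{v}_{k}^{(m)}+\Delta t\,\mathbf{a}_{k}^{(m)}$. Applying the reverse triangle inequality for the Euclidean norm and then the acceleration constraint \eqref{Equ:AccConstraints} yields
\[
	\left|v_{k+1}^{(m)}-v_{k}^{(m)}\right|\leq\left\|\mathbf{v}_{k+1}^{(m)}-\mathbf{v}_{k}^{(m)}\right\|_{2}=\Delta t\left\|\mathbf{a}_{k}^{(m)}\right\|_{2}\leq\bar{a}\Delta t\ef
\]
Dividing by $\bar{a}\Delta t>0$ shows that the two dimensionless ratios $v_{k}^{(m)}/(\bar{a}\Delta t)$ and $v_{k+1}^{(m)}/(\bar{a}\Delta t)$ differ by at most one.

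For the inclusion \eqref{Equ:ChangeContHorizon1} I would then combine monotonicity of the ceiling with its integer-shift property $\lceil r\pm 1\rceil=\lceil r\rceil\pm 1$. Writing $r\eqdef v_{k}^{(m)}/(\bar{a}\Delta t)$, the preceding step gives $r-1\leq v_{k+1}^{(m)}/(\bar{a}\Delta t)\leq r+1$; applying the ceiling throughout and invoking both properties produces $\lceil r\rceil-1\leq\tilde{N}_{k+1}^{(m)}\leq\lceil r\rceil+1$, which is exactly the claimed three-element membership since $\tilde{N}_{k}^{(m)}=\lceil r\rceil$.

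Finally, \eqref{Equ:ChangeContHorizon2} follows from two one-line observations. The upper bound is just \eqref{Equ:MaxContingencyHorizon} re-read at step $k+1$: the velocity constraint \eqref{Equ:VelConstraints} gives $v_{k+1}^{(m)}\leq\bar{v}$, so monotonicity of the ceiling yields $\tilde{N}_{k+1}^{(m)}\leq\lceil\bar{v}/(\bar{a}\Delta t)\rceil=\tilde{N}$. The lower bound holds because $v_{k+1}^{(m)}=\|\mathbf{v}_{k+1}^{(m)}\|_{2}\geq 0$ forces a non-negative ceiling. I do not anticipate a genuine obstacle here; the proof is short and essentially mechanical. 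The only point demanding care is the ceiling arithmetic in the middle step, in particular verifying the shift identity and monotonicity for whichever convention of $\lceil\cdot\rceil$ is in force (the excerpt uses the strict one), so that the inequalities on the real-valued speeds translate correctly into the integer-valued horizon bounds.
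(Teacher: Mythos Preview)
Your proposal is correct and follows essentially the same route as the paper: extract the velocity update $\mathbf{v}_{k+1}^{(m)}=\mathbf{v}_{k}^{(m)}+\Delta t\,\mathbf{a}_{k}^{(m)}$, apply the triangle inequality together with \eqref{Equ:AccConstraints} to bound the speed change by $\bar a\Delta t$, and use \eqref{Equ:VelConstraints} for the range \eqref{Equ:ChangeContHorizon2}. The only difference is one of detail---you spell out the monotonicity and integer-shift properties of $\lceil\cdot\rceil$, whereas the paper simply states the double inequality on the speeds and declares the claim proven.
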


\begin{proof}
	Following \eqref{Equ:ContingencyHorizon}, the new contingency horizon
	\begin{equation*}
		\tilde{N}_{k+1}^{(m)}\eqdef\left\lceil\frac{v_{k+1}^{(m)}}{\bar{a}\Delta t}\right\rceil\ef
	\end{equation*}
	depends on the magnitude $v_{k+1}^{(m)}$ of the velocity $\mathbf{v}_{k+1}^{(m)}$. By Assumption \ref{The:Dynamics},
	\begin{equation}\label{Equ:ChangeContHorizonProof1}
		\mathbf{v}_{k+1}^{(m)}=\mathbf{v}_{1|k}^{(m)}=\mathbf{v}_{k}^{(m)}+\Delta t\,\mathbf{a}_{0|k}^{(m)}\ec
	\end{equation}
	where $\mathbf{v}_{k}^{(m)}$ is the (fixed) velocity in step $k$ and $\mathbf{a}_{0|k}^{(m)}$ is the (variable) first nominal predicted input. Condition \eqref{Equ:ChangeContHorizon2} follows from the velocity constraint, due to which
	\begin{equation*}
		0\leq v_{k+1}^{(m)}\leq\bar{v}\ef
	\end{equation*}
	Condition \eqref{Equ:ChangeContHorizon1} follows from the acceleration constraint
	\begin{equation*}
		\left\|\mathbf{a}_{0|k}^{(m)}\right\|_{2}\leq\bar{a}
	\end{equation*}
	in combination with \eqref{Equ:ChangeContHorizonProof1}. The triangle inequality yields
	\begin{equation}\label{Equ:ChangeContHorizonProof2}
		 \left\|\mathbf{v}_{k}^{(m)}\right\|_{2}-\bar{a}\Delta t\leq\left\|\mathbf{v}_{k+1}^{(m)}\right\|_{2}\leq \left\|\mathbf{v}_{k}^{(m)}\right\|_{2}+\bar{a}\Delta t\ec
	\end{equation}
	proving the claim. The geometry situation is illustrated in Figure \ref{Fig:NewContingencyTraj}.
	\end{proof}
	
	\begin{figure}[H]
	\centering
    \includegraphics[scale=1]{./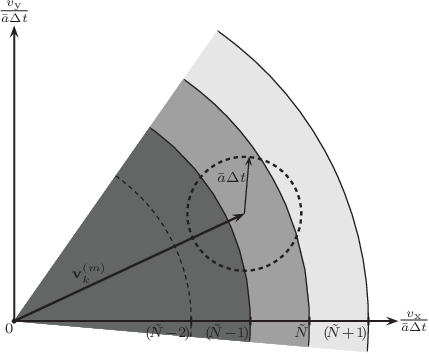} 
	\caption{Composition of the initial velocity $\mathbf{v}_{1|k}^{(m)}$ for the new contingency trajectory. Since it is the sum of $\mathbf{v}_{k}^{(m)}$ (fixed) and $\mathbf{a}_{0|k}^{(m)}\Delta t$ (decision variable), it must lie inside the bold dotted circle.\label{Fig:NewContingencyTraj}}
\end{figure}

Based on the contingency control inputs \eqref{Equ:ChangeContHorizon}, the states of the (predicted) contingency trajectory are given by
\begin{subequations}\label{Equ:ContingencyStates}
\begin{equation}\label{Equ:ContingencyStates1}
	\mathbf{\tilde{s}}_{i|k}^{(m)}=\mathbf{A}^i\mathbf{s}_{k}^{(m)}+\mathbf{A}^{i-1}\mathbf{B}\mathbf{a}_{0|k}^{(m)}+\sum_{l=1}^{i-1}\mathbf{A}^{i-l-1}\mathbf{B}\mathbf{\tilde{a}}_{l|k}^{(m)}
\end{equation}
for $i=1,2,\dots,\tilde{N}_{k+1}^{(m)}+1$. Beyond this, for all predictions steps $i\geq\tilde{N}_{k+1}^{(m)}+1$, the contingency state remains constant,
\begin{equation}\label{Equ:ContingencyStates2}
	\mathbf{\tilde{s}}_{i|k}^{(m)}=\mathbf{\tilde{s}}_{\tilde{N}_{k+1}^{(m)}+1|k}^{(m)}\ec
\end{equation}
\end{subequations}
because the corresponding contingency velocities zero. In \eqref{Equ:ContingencyStates1}, the contingency inputs $\mathbf{\tilde{a}}_{l|k}^{(m)}$ can be substituted by \eqref{Equ:ContingencyInputs}. Hence the contingency trajectory in the FTOCP is fully parameterized, except for the unknown contingency horizon $\tilde{N}_{k+1}^{(m)}$.

\subsection{Finite-time Optimal Control Problem (FTOCP)}\label{Sec:SetupFTOCP}

Consider again a fixed agent $m$. The cost function $J^{(m)}(\cdot)$ of the FTOCP can be any generic function of the predicted nominal inputs and states,
\begin{subequations}\label{Equ:CostFunction}
\begin{equation}\label{Equ:NomCostFunction}
	J^{(m)}\left(\mathbf{a}_{0|k}^{(m)},\dots,\mathbf{a}_{N-1|k}^{(m)},\mathbf{s}_{1|k}^{(m)},\dots,\mathbf{s}_{N|k}^{(m)}\right)\ef
\end{equation}
Optionally, it could be augmented by an (identical or different) cost function $\tilde{J}^{(m)}(\cdot)$ of the contingency inputs and states,
\begin{equation}\label{Equ:CntCostFunction}
	\tilde{J}^{(m)}\left(\mathbf{\tilde{a}}_{1|k}^{(m)},\dots,\mathbf{\tilde{a}}_{\tilde{N}_{k+1}^{(m)}|k}^{(m)},\mathbf{\tilde{s}}_{2|k}^{(m)},\dots,\mathbf{\tilde{s}}_{\tilde{N}_{k+1}^{(m)}+1|k}^{(m)}\right)\ef
\end{equation}
\end{subequations}
The cost function(s) may be a sum of stage cost terms. It (They) may be related, for instance, to reaching a target state or tracking a predefined reference trajectory.

Combining the previous derivations \eqref{Equ:CostFunction}, \eqref{Equ:NominalStatePredictions}, \eqref{Equ:ContingencyAcceleration}, \eqref{Equ:ContingencyStates}, and \eqref{Equ:CollAvoidConstraints}, the FTOCP reads as follows:
\begin{subequations}\label{Equ:FTOCP}
\begin{align}
	&\min\; J^{(m)}\left(\mathbf{a}_{0|k}^{(m)},\dots,\mathbf{a}_{N-1|k}^{(m)},\mathbf{s}_{1|k}^{(m)},\dots,\mathbf{s}_{N|k}^{(m)}\right)\\
	&\mathrm{subject}\;\mathrm{to}\nonumber\\
	&\mathbf{s}_{i|k}^{(m)}=\mathbf{A}^i\mathbf{s}_{k}^{(m)}+\sum_{l=0}^{i-1}\mathbf{A}^{i-l-1}\mathbf{B}\mathbf{a}_{l|k}^{(m)}\quad\fa i\hspace*{-0.05cm}=\hspace*{-0.05cm}1,...,N\\
	&\left\|\mathbf{a}_{i|k}^{(m)}\right\|_{2}\leq \bar{a}\qquad\fa i\hspace*{-0.05cm}=\hspace*{-0.05cm}0,...,N-1\\
	&\mathbf{s}_{i|k}^{(m)}\in\mathbb{S}^{(m)}\qquad\fa i\hspace*{-0.05cm}=\hspace*{-0.05cm}1,...,N\\
	&\mathbf{\tilde{a}}_{i|k}^{(m)}=\textstyle{\frac{-1}{\hat{N}_{k+1}^{(m)}\Delta t}}\mathbf{v}_{k}^{(m)}+\textstyle{\frac{-1}{\hat{N}_{k+1}^{(m)}}}\mathbf{a}_{0|k}^{(m)}\quad\fa i\hspace*{-0.05cm}=\hspace*{-0.05cm}1,...,\hat{N}_{k+1}^{(m)}\hspace*{-0.1cm}\\
	& \mathbf{\tilde{s}}_{i|k}^{(m)}=\mathbf{A}^i\mathbf{s}_{k}^{(m)}+\mathbf{A}^{i-1}\mathbf{B}\mathbf{a}_{0|k}^{(m)}+\sum_{l=1}^{i-1}\mathbf{A}^{i-l-1}\mathbf{B}\mathbf{\tilde{a}}_{l|k}^{(m)}\nonumber\\
	&\hspace*{4.6cm}\fa i\hspace*{-0.05cm}=\hspace*{-0.05cm}1,...,\hat{N}_{k+1}^{(m)}+1\\
	&\mathbf{\tilde{s}}_{i|k}^{(m)}\in\mathbb{S}^{(m)}\qquad\fa i\hspace*{-0.05cm}=\hspace*{-0.05cm}2,...,\hat{N}_{k+1}^{(m)}+1\\
	&\left[\mathbf{g}_{i|k}^{(j,m)\mathrm{T}}\;0\;0\;0\right]\mathbf{\tilde{s}}_{i|k}^{(m)}\leq h_{i|k}^{(j,m)}\nonumber\\
	&\hspace*{0.7cm}\fa i\hspace*{-0.05cm}=\hspace*{-0.05cm}1,...,\hat{N}_{k+1}^{(m)}+1\ec\quad\fa j\hspace*{-0.05cm}=\hspace*{-0.05cm}1,...,M\ec\;j\hspace*{-0.05cm}\neq m
\end{align}
\end{subequations}
Conceptually---yet possibly not computationally---the only decision variables of \eqref{Equ:FTOCP} are the nominal control inputs. The initial condition $\mathbf{s}_{k}^{(m)}$ in (\ref{Equ:FTOCP}b) and (\ref{Equ:FTOCP}f) is given and fixed. The predicted states follow from the linear equality conditions (\ref{Equ:FTOCP}b). The contingency inputs (\ref{Equ:FTOCP}e), and hence the contingency states (\ref{Equ:FTOCP}f), can be computed solely based on the first nominal control input.

The collision avoidance constraints (\ref{Equ:FTOCP}g) need to be enforced only for the contingency states. The contingency inputs (\ref{Equ:FTOCP}e) automatically satisfy the input constraints, if $\hat{N}_{k+1}^{(m)}$ is a \emph{valid} contingency horizon. That is, if it turns out to satisfy condition \eqref{Equ:ContingencyHorizon}:
\begin{equation}\label{Equ:ContingencyHorizonCand}
	\hat{N}_{k+1}^{(m)}\overset{?}{=}\left\lceil\frac{v_{1|k}^{(m)\star}}{\bar{a}\Delta t}\right\rceil\ec
\end{equation}
where $v_{1|k}^{(m)\star}$ is the magnitude of the one-step predicted velocity $\mathbf{v}_{1|k}^{(m)\star}$ (which is a part of $\mathbf{s}_{1|k}^{(m)\star}$). In other words, $\hat{N}_{k+1}^{(m)}$ is used as a candidate for the contingency horizon for solving the FTOCP in time step $k$, since $\tilde{N}_{k+1}^{(m)}$ is unknown. The candidate horizon needs to be confirmed afterwards, based on the optimal solution, by checking \eqref{Equ:ContingencyHorizonCand}. The details on how this is handled are discussed in Section \ref{Sec:CMCAlgorithm}.

The following result about the computational complexity of the FTOCP \eqref{Equ:FTOCP} is important. The proof, however, is obvious and therefore omitted. 

\begin{proposition}[Convexity of the FTOCP]
	Suppose\vspace*{-0.05cm}
	\begin{itemize}
		\item[(1)] the cost function \eqref{Equ:CostFunction} is convex, and
		\item[(2)] the constraint set $\mathbb{S}^{(m)}$ is convex.
	\end{itemize}\vspace*{-0.05cm}
	Then the FTOCP \eqref{Equ:FTOCP} is a convex optimization problem for each agent $m\in\{1,2,\dots,M\}$ and every choice of $\hat{N}_{k+1}^{(m)}$.
\end{proposition}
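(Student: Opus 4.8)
The plan is to confirm the defining properties of a convex program---a convex objective minimized over a convex feasible set---directly from the structure of \eqref{Equ:FTOCP}. The single observation that makes every subsequent step mechanical is that, as already noted below \eqref{Equ:FTOCP}, the only genuine decision variables are the nominal inputs $\mathbf{a}_{0|k}^{(m)},\dots,\mathbf{a}_{N-1|k}^{(m)}$, and every other quantity in the problem is an \emph{affine} function of them. Indeed, by the linear dynamics the nominal states \eqref{Equ:NominalStatePredictions} are affine in the inputs; for a fixed candidate horizon $\hat{N}_{k+1}^{(m)}$ the coefficients appearing in the contingency inputs (\ref{Equ:FTOCP}e) are fixed scalars and $\mathbf{v}_{k}^{(m)}$ is measured data, so these inputs are affine in $\mathbf{a}_{0|k}^{(m)}$; and the contingency states (\ref{Equ:FTOCP}f) are in turn affine in the inputs. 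I would establish this affine dependence first, since it underpins everything else.

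With the affine structure fixed, I would classify the objective and each constraint in turn. The objective \eqref{Equ:CostFunction} is convex in its arguments by hypothesis (1), and composing a convex function with this affine map preserves convexity. The equality constraints (\ref{Equ:FTOCP}b), (\ref{Equ:FTOCP}e), and (\ref{Equ:FTOCP}f) are affine and thus cut out a convex (affine) set. The acceleration bounds (\ref{Equ:FTOCP}c) are sublevel sets of the Euclidean norm, i.e., second-order-cone constraints, and hence convex. The state-membership constraints (\ref{Equ:FTOCP}d), together with the analogous requirement imposed on the contingency states, demand that an affine image of the decision variables lie in $\mathbb{S}^{(m)}$, which is convex by hypothesis (2); since the preimage of a convex set under an affine map is convex, these too are convex.

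The one constraint that warrants a second look---and the reason the claim is not completely vacuous---is the collision avoidance constraint (\ref{Equ:FTOCP}g). Superficially its parameters $\mathbf{g}_{i|k}^{(j,m)}$ and $h_{i|k}^{(j,m)}$ involve positions and might be feared to introduce nonconvexity. The key point is that, by Remark \ref{Rem:ContingencyPlan}, these parameters are built solely from the contingency positions $\mathbf{\tilde{p}}_{i|k}^{(m)}$ and $\mathbf{\tilde{p}}_{i|k}^{(j)}$, which depend only on the measured states $\mathbf{s}_{k}^{(m)}$ and $\mathbf{s}_{k}^{(j)}$ and are therefore \emph{fixed data}, not functions of the decision variables. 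Hence (\ref{Equ:FTOCP}g) is a linear inequality in the affine contingency state $\mathbf{\tilde{s}}_{i|k}^{(m)}$, defining a halfspace in input space. This is precisely the design choice---anchoring the bisecting plane to the fixed contingency trajectory rather than to the optimized predicted positions---that averts the nonconvexity one would otherwise incur. Collecting the pieces, the feasible set is a finite intersection of convex sets and the objective is convex, so \eqref{Equ:FTOCP} is convex for every agent $m$ and every fixed $\hat{N}_{k+1}^{(m)}$. The only genuine difficulty is thus one of bookkeeping: verifying that no constraint secretly depends nonlinearly on the inputs, which the fixed-data nature of the collision-avoidance parameters guarantees.
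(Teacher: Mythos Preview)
Your argument is correct and is precisely the standard verification the paper has in mind; the paper in fact omits the proof entirely, calling it ``obvious.'' Your write-up supplies the bookkeeping the paper skips, and in particular you correctly isolate the only nontrivial point: the collision-avoidance data $\mathbf{g}_{i|k}^{(j,m)}$, $h_{i|k}^{(j,m)}$ are computed from the measured states alone (Remark~\ref{Rem:ContingencyPlan}) and are therefore fixed, making (\ref{Equ:FTOCP}h) a linear inequality rather than a nonconvex one.
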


Denote the optimal solution of \eqref{Equ:FTOCP} with a `$\star$'. In particular, 
	\begin{equation}\label{Equ:OptimalNomPredictions}
		\mathbf{a}_{0|k}^{(m)\star},\;\dots,\;
		\mathbf{a}_{N-1|k}^{(m)\star}\qquad\text{and}\qquad
		\mathbf{s}_{0|k}^{(m)\star},\;\dots,\;
	\mathbf{s}_{N|k}^{(m)\star}
	\end{equation}
	are the optimal nominal control inputs and the corresponding optimal nominal states. And
	\begin{subequations}\label{Equ:OptimalCtgPredictions}
	\begin{equation}\label{Equ:OptimalCtgPredictions1}
		\mathbf{\tilde{a}}_{1|k}^{(m)\star},\;\dots,\;
		\mathbf{\tilde{a}}_{\hat{N}_{k+1}^{(m)}|k}^{(m)\star}\quad\text{and}\quad
		\mathbf{\tilde{s}}_{1|k}^{(m)\star},\;\dots,\;
	\mathbf{\tilde{s}}_{\hat{N}_{k+1}^{(m)}+1|k}^{(m)\star}
	\end{equation}
	are the optimal contingency control inputs and the corresponding optimal contingency states. After the solution to \eqref{Equ:FTOCP}, the optimal contingency inputs and states \eqref{Equ:OptimalCtgPredictions1} can be naturally augmented by
	\begin{align}\label{Equ:OptimalCtgPredictions2}
		&\mathbf{\tilde{a}}_{\hat{N}_{k+1}^{(m)}+1|k}^{(m)\star}=\dots=\mathbf{\tilde{a}}_{N-1|k}^{(m)\star}=\mathbf{0}_{3}\ec\\
		&\mathbf{\tilde{s}}_{\hat{N}_{k+1}^{(m)}+1|k}^{(m)\star}=\mathbf{\tilde{s}}_{\hat{N}_{k+1}^{(m)}+2|k}^{(m)\star}=\dots=\mathbf{\tilde{s}}_{N|k}^{(m)\star}\ec
	\end{align}
	\end{subequations}
	for notational convenience.

\subsection{CMC Algorithm}\label{Sec:CMCAlgorithm}

It remains to specify how a valid contingency horizon $\hat{N}_{k+1}^{(m)}$ is selected. To this end, the overall CMC algorithm is composed as follows.\vspace*{0.2cm}

\noindent\fbox{\begin{minipage}{8.4cm}
Steps to be performed by each agent $m$ in every step $k$:
\begin{itemize}
	\item[(1)] Generate the contingency predictions for agent $m$ itself, and all other agents $j=1,2,\dots,M$, $j\neq m$, as described Section \ref{Sec:ContPlan}.
	\item[(2)] Compute the collision avoidance constraints with respect to all other agents $j=1,2,\dots,M$, $j\neq m$, and for all prediction steps $i=1,2,\dots,\max\{N,\tilde{N}\}$, as described in Section \ref{Sec:CollisionAvoidance}.
	\item[(3)] Set up the FTOCP for agent $m$, as described in Section \ref{Sec:SetupFTOCP}. Proceed through the following three steps:
	\begin{itemize}
		\item[(a)] Solve the FTOCP \eqref{Equ:FTOCP} for $\hat{N}_{k+1}^{(m)}\leftarrow\tilde{N}_{k}^{(m)}+1$ (unless $\tilde{N}_{k}^{(m)}=\tilde{N}$). If the solutions satisfies \eqref{Equ:ContingencyHorizonCand},\vspace*{0.1cm}\\
	\hspace*{2cm}$\displaystyle\hat{N}_{k+1}^{(m)}=\left\lceil\frac{v_{1|k}^{(m)\star}}{\bar{a}\Delta t}\right\rceil\ec$\vspace*{0.1cm}\\
		then stop and apply $\mathbf{a}_{0|k}^{(m)\star}$ to the system.
		\item[(b)] Else solve the FTOCP \eqref{Equ:FTOCP} for $\hat{N}_{k+1}^{(m)}\leftarrow\tilde{N}_{k}^{(m)}$, adding the constraints\vspace*{0.3cm}\\
			\hspace*{0.4cm}$\displaystyle\left\|
			\begin{bmatrix}
				\mathbf{0}_{3\times 3} & \mathbf{0}_{3\times 3}\\
				\mathbf{0}_{3\times 3} & \mathbf{I}_{3\times 3}
			\end{bmatrix}
			\mathbf{s}_{1|k}^{(m)}\right\|_{2}\leq\bar{a}\tilde{N}_{k}^{(m)}\Delta t\ef$	\hfill (\ref{Equ:FTOCP}i)\vspace*{0.3cm}\\
		If the solution satisfies \eqref{Equ:ContingencyHorizonCand}, then stop and apply $\mathbf{a}_{0|k}^{(m)\star}$ to the system.
		\item[(c)] Else solve the FTOCP \eqref{Equ:FTOCP} for $\hat{N}_{k+1}^{(m)}\leftarrow\tilde{N}_{k}^{(m)}-1$, adding the constraints\vspace*{0.3cm}\\
			\hspace*{0.1cm}$\displaystyle\left\|
			\begin{bmatrix}
				\mathbf{0}_{3\times 3} & \mathbf{0}_{3\times 3}\\
				\mathbf{0}_{3\times 3} & \mathbf{I}_{3\times 3}
			\end{bmatrix}
			\mathbf{s}_{1|k}^{(m)}\right\|_{2}\leq\bar{a}\hspace*{-0.05cm}\left(\hspace*{-0.05cm}\tilde{N}_{k}^{(m)}\hspace*{-0.1cm}-\hspace*{-0.1cm}1\hspace*{-0.05cm}\right)\hspace*{-0.05cm}\Delta t.$	\hfill (\ref{Equ:FTOCP}j)\vspace*{0.3cm}\\
		Apply $\mathbf{a}_{0|k}^{(m)\star}$ to the system.
	\end{itemize}
\end{itemize}
\end{minipage}}\vspace*{0.2cm}

\begin{remark}[\textbf{Limited sensor range}]
  (a) It is possible to omit collision avoidance for agent $j$ in the FTOCP of agent $m$, and vice versa, if their distance exceeds a certain threshold. A suitable threshold is (circa) twice the minimum braking distance of an agent from its maximum speed. (b) This modification can be used to reduce the number of constraints in the FTOCP and for a reasonable selection of a physical sensor range.
\end{remark}

\section{SYSTEM-THEORETICAL ANALYSIS}\label{Sec:TheoreticalAnalysis}

The goal of this section is to establish the recursive feasibility of the CMC algorithm, and the guarantee of no collisions between the agents and persistent constraint satisfaction in closed-loop operation.

\begin{lemma}[\textbf{Collision avoidance}]\label{The:CollisionAvoidance}
	Suppose the FTOCPs \eqref{Equ:FTOCP} for all agents $m=1,2,\dots,M$ are feasible in step $k$. Then (a) the optimal first control inputs $\mathbf{a}_{0|k}^{(m)\star}$ cause no collisions between the agents in step $k+1$, and (b) the contingency control inputs and states \eqref{Equ:OptimalCtgPredictions} are collision-free between all agents in all prediction steps $i=1,2,\dots,N$.
\end{lemma}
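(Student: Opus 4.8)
The plan is to reduce both statements to a single separating-hyperplane estimate and then exploit the symmetry of the pairwise construction. By Assumption \ref{The:AgentDim}, two agents are collision-free at a pair of positions exactly when the Euclidean distance between those positions is at least $2\rho$, so for both parts it suffices to bound $\norm{\tilde{\mathbf{p}}_{i|k}^{(j)\star}-\tilde{\mathbf{p}}_{i|k}^{(m)\star}}_{2}\geq 2\rho$, where $\tilde{\mathbf{p}}_{i|k}^{(m)\star}$ is the position block of the optimal contingency state $\tilde{\mathbf{s}}_{i|k}^{(m)\star}$. First I would unpack the geometric meaning of the collision-avoidance constraint (\ref{Equ:FTOCP}g). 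Writing $\mathbf{c}_{i|k}=\tfrac12(\tilde{\mathbf{p}}_{i|k}^{(m)}+\tilde{\mathbf{p}}_{i|k}^{(j)})$ for the midpoint of the two reference contingency positions and using $\tilde{\mathbf{p}}_{i|k}^{(j)}-\tilde{\mathbf{p}}_{i|k}^{(m)}=d_{i|k}^{(j,m)}\mathbf{g}_{i|k}^{(j,m)}$, a one-line computation gives $h_{i|k}^{(j,m)}=\mathbf{g}_{i|k}^{(j,m)\mathrm{T}}\mathbf{c}_{i|k}-\rho$. Hence (\ref{Equ:FTOCP}g) states exactly that $\tilde{\mathbf{p}}_{i|k}^{(m)\star}$ lies at signed distance at least $\rho$ on agent $m$'s side of the perpendicular bisector of the segment $[\tilde{\mathbf{p}}_{i|k}^{(m)},\tilde{\mathbf{p}}_{i|k}^{(j)}]$.

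The crucial step is the consistency of the two agents' bisectors. By Assumption \ref{The:Swarm}(b) each agent measures all states exactly, and by Remark \ref{Rem:ContingencyPlan} the reference contingency trajectories are a deterministic function of those states; therefore agent $j$ constructs the identical plane, with $\mathbf{g}_{i|k}^{(m,j)}=-\mathbf{g}_{i|k}^{(j,m)}$, the same $d_{i|k}^{(j,m)}$, and the same midpoint $\mathbf{c}_{i|k}$. Consequently, at any step $i$ where both agents' constraints are active, their optimal contingency positions satisfy $\mathbf{g}_{i|k}^{(j,m)\mathrm{T}}(\tilde{\mathbf{p}}_{i|k}^{(m)\star}-\mathbf{c}_{i|k})\leq-\rho$ and $\mathbf{g}_{i|k}^{(j,m)\mathrm{T}}(\tilde{\mathbf{p}}_{i|k}^{(j)\star}-\mathbf{c}_{i|k})\geq\rho$. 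Subtracting these and using $\norm{\mathbf{g}_{i|k}^{(j,m)}}_{2}=1$ together with Cauchy--Schwarz yields $\norm{\tilde{\mathbf{p}}_{i|k}^{(j)\star}-\tilde{\mathbf{p}}_{i|k}^{(m)\star}}_{2}\geq\mathbf{g}_{i|k}^{(j,m)\mathrm{T}}(\tilde{\mathbf{p}}_{i|k}^{(j)\star}-\tilde{\mathbf{p}}_{i|k}^{(m)\star})\geq 2\rho$, the desired separation. This is the heart of the argument.

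Part (a) then follows by specializing to $i=1$: from (\ref{Equ:FTOCP}f) one has $\tilde{\mathbf{s}}_{1|k}^{(m)\star}=\mathbf{s}_{1|k}^{(m)\star}$, and since the candidate contingency horizons are nonnegative the constraint at $i=1$ is active for every agent, so the actual step-$(k+1)$ positions $\mathbf{p}_{1|k}^{(m)\star}$ (the only part applied in closed loop) are pairwise at least $2\rho$ apart. For part (b) the same estimate covers every prediction step at which both agents' constraints are enforced, i.e.\ $i\leq\min\{\hat{N}_{k+1}^{(m)},\hat{N}_{k+1}^{(j)}\}+1$; once an agent's horizon is exhausted its contingency state is frozen at standstill by \eqref{Equ:ContingencyStates2} and \eqref{Equ:OptimalCtgPredictions2}, so the fully-stopped steps inherit the separation already established at the last jointly-active step.

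I expect the main obstacle to be precisely the window in which one agent has already stopped while the other is still braking, i.e.\ $\min+1<i\leq\max+1$: there only one of the two half-space constraints is enforced, and the bisector keeps shifting as the moving agent's reference position advances, so the clean subtract-two-inequalities argument does not apply verbatim. To close this gap I would use that a stopped agent's reference and optimal contingency positions are \emph{both} frozen --- because $\hat{N}_{k+1}^{(m)}+1\geq\tilde{N}_{k}^{(m)}$ places the step beyond that agent's own reference braking horizon --- together with the collision-freeness $d_{i|k}^{(j,m)}\geq 2\rho$ of the reference trajectories, and argue that the still-moving agent, being held on its own side of each shifting bisector while decelerating along a straight line, cannot re-enter the $2\rho$-ball around the stopped agent.
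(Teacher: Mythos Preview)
Your core argument is exactly the paper's: the pairwise constraints for agents $m$ and $j$ at each prediction step are parallel half-spaces (the tightened perpendicular bisector on each side) with gap $2\rho$, so feasibility forces $\norm{\tilde{\mathbf p}_{i|k}^{(j)\star}-\tilde{\mathbf p}_{i|k}^{(m)\star}}_{2}\ge 2\rho$; part~(a) then follows because $\mathbf s_{k+1}^{(m)}=\mathbf s_{1|k}^{(m)\star}=\tilde{\mathbf s}_{1|k}^{(m)\star}$. Two small remarks. First, the collision-avoidance constraint in the FTOCP is (\ref{Equ:FTOCP}h), not (\ref{Equ:FTOCP}g). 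Second, you are in fact \emph{more} careful than the paper about the ``window'' $i\in(\min\{\hat N_{k+1}^{(m)},\hat N_{k+1}^{(j)}\}+1,\,\max\{\hat N_{k+1}^{(m)},\hat N_{k+1}^{(j)}\}+1]$: the paper simply asserts the parallel-plane picture for all $i=1,\dots,N$ without distinguishing this case, whereas you correctly observe that only one of the two half-space constraints is explicitly enforced there while the bisector can still move with the other agent's reference. Your proposed patch is in the right spirit but is not airtight as stated: the inequality $d_{i|k}^{(j,m)}\ge 2\rho$ for the \emph{reference} contingency trajectories is not part of the lemma's hypotheses (it comes out of the recursive-feasibility argument in Theorem~\ref{The:RecFeasibility}, not the other way around), and the ``cannot re-enter the $2\rho$-ball'' step needs a concrete bound relating the stopped agent's \emph{optimal} frozen position to the moving bisector, since the frozen optimal position need not coincide with the frozen reference position. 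The paper does not supply this either, so your proof is at least as complete as the published one on this point.
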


\begin{wrapfigure}{r}{0.6\columnwidth}
	\vspace*{0.1cm}
    \includegraphics[scale=1]{./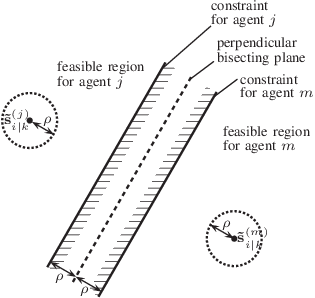} 
    \caption{Illustration of mutual collision avoidance.\label{Fig:ProofNoCollision}}
\end{wrapfigure}

\begin{proof}
Pick any two fixed agents, $j$ and $m$ ($j\neq m$), as shown in Figure \ref{Fig:ProofNoCollision}. By construction, the collision avoidance constraints for agents $j$ and $m$ in each prediction step $i=1,2,\dots,N$ are two parallel, linear constraints, respectively. Hence the feasible regions for the contingency states $\mathbf{\tilde{s}}_{i|k}^{(m)}$ and $\mathbf{\tilde{s}}_{i|k}^{(j)}$ are separated by two parallel (hyper-)planes. Their distance is $2\rho$, since each (hyper-)plane has the distance $\rho$ from the perpendicular bisecting (hyper-)plane. Given that each agent has a spherical shape with radius $\rho$, a collision is clearly impossible if $j$ and $m$ remain in their respective feasible regions, thus proving (b).

Due to Assumption \ref{The:Dynamics}(b) and based on (\ref{Equ:FTOCP}b,f), the states of agent $j$ and $m$ in step $k+1$ satisfy
\begin{equation}
 	\mathbf{s}_{k+1}^{(j)}=\mathbf{s}_{1|k}^{(j)\star}=\mathbf{\tilde{s}}_{1|k}^{(j)\star}\quad\text{and}\quad
	\mathbf{s}_{k+1}^{(m)}=\mathbf{s}_{1|k}^{(m)\star}=\mathbf{\tilde{s}}_{1|k}^{(m)\star}\ec
\end{equation}
i.e., they are equal to the first predicted nominal state and the first contingency state, respectively, thus proving (a).
\end{proof}

\begin{theorem}[\textbf{Recursive feasibility}]\label{The:RecFeasibility}
	Suppose all agents $m=1,2,\dots,M$ operate under the CMC regime. If the FTOCPs \eqref{Equ:FTOCP} of all agents are feasible in step $k=0$, then they will remain feasible for all agents in all future time steps $k=1,2,3,\dots$.
\end{theorem}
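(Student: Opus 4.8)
The plan is to prove the statement by induction on $k$, the inductive engine being the explicit construction of a feasible candidate solution at step $k+1$ from the optimal solution at step $k$. The candidate is simply ``every agent continues to execute its contingency (braking) trajectory.'' Concretely, for each agent $m$ I would take as first nominal input at step $k+1$ the first contingency deceleration $\mathbf{\tilde{a}}_{1|k}^{(m)\star}$ from step $k$, and extend the resulting trajectory by the remaining contingency inputs and then zeros. Since by Lemma \ref{The:CollisionAvoidance}(a) the actually applied inputs give $\mathbf{s}_{k+1}^{(m)}=\mathbf{\tilde{s}}_{1|k}^{(m)\star}$, this candidate is dynamically consistent, obeys the input bound \eqref{Equ:AccConstraints} (contingency decelerations are feasible by construction), and meets the static state constraints \eqref{Equ:StateConstraints} because its states form a suffix of the step-$k$ contingency states, which already satisfied (\ref{Equ:FTOCP}f).

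The crux is the collision-avoidance constraints (\ref{Equ:FTOCP}g), because these are recomputed at step $k+1$ from the current measured states. Here I would establish the key alignment: the reference contingency trajectory that agent $m$ builds at step $k+1$ (Section \ref{Sec:CollisionAvoidance}) coincides, index-shifted, with the contingency trajectory it planned at step $k$. Indeed, both brake from the same velocity $\mathbf{v}_{k+1}^{(m)}=\mathbf{v}_{1|k}^{(m)\star}$, start from the same position $\mathbf{p}_{k+1}^{(m)}=\mathbf{\tilde{p}}_{1|k}^{(m)\star}$, and use the same horizon, since the validity check \eqref{Equ:ContingencyHorizonCand} enforced $\hat{N}_{k+1}^{(m)}=\tilde{N}_{k+1}^{(m)}$ at step $k$; hence $\mathbf{\tilde{p}}_{i|k+1}^{(m)}=\mathbf{\tilde{p}}_{i+1|k}^{(m)\star}$, and the same holds for every other agent $j$. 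Consequently the bisecting half-spaces at step $k+1$ are exactly the perpendicular bisectors of the step-$k$ contingency positions, whose pairwise distances satisfy $d_{i|k+1}^{(j,m)}\geq 2\rho$ by Lemma \ref{The:CollisionAvoidance}(b). Because the candidate makes each agent's planned contingency equal to its own reference contingency, constraint (\ref{Equ:FTOCP}g) collapses to $\mathbf{g}_{i|k+1}^{(j,m)\mathrm{T}}\mathbf{\tilde{p}}_{i|k+1}^{(m)}\leq\mathbf{g}_{i|k+1}^{(j,m)\mathrm{T}}\mathbf{\tilde{p}}_{i|k+1}^{(m)}+\bigl(\tfrac{1}{2} d_{i|k+1}^{(j,m)}-\rho\bigr)$, which is just $\tfrac{1}{2} d_{i|k+1}^{(j,m)}-\rho\geq 0$ and therefore holds. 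This is the step I expect to be the main obstacle, as it is what ties the previously proven mutual separation of contingency trajectories into feasibility of the freshly generated constraints.

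It remains to verify that the candidate is admitted by the horizon logic of the CMC algorithm. Writing $n=\tilde{N}_{k+1}^{(m)}$, the candidate first input yields one-step speed $v_{1|k+1}^{(m)}=v_{k+1}^{(m)}(n-1)/n$; using $v_{k+1}^{(m)}\leq\bar{a}\Delta t\,n$ from \eqref{Equ:ContingencyHorizon}, this satisfies the velocity bound $v_{1|k+1}^{(m)}\leq\bar{a}(n-1)\Delta t$ added in branch (c), and its ceiling-horizon is exactly $n-1$. More fundamentally, the acceleration bound forces $v_{1|k+1}^{(m)}\geq v_{k+1}^{(m)}-\bar{a}\Delta t>\bar{a}(n-2)\Delta t$, so by the Proposition around \eqref{Equ:ChangeContHorizon} every feasible point of branch (c) has ceiling-horizon $\geq n-1$, while the added constraint (\ref{Equ:FTOCP}j) forces it $\leq n-1$; hence branch (c) is automatically \emph{valid}, and the candidate is one of its feasible points. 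Therefore branch (c) always returns a feasible solution with a valid horizon, even if branches (a) and (b) are rejected.

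Finally I would note that this construction is performed independently and simultaneously by every agent: the alignment above uses only each agent's own state at step $k+1$, and the separation inequalities it relies on are symmetric. Thus if all FTOCPs are feasible at step $k$, the ``continue-braking'' candidates render all FTOCPs feasible at step $k+1$; since feasibility holds at $k=0$ by hypothesis, induction yields feasibility for all $k$.
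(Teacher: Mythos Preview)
Your approach is essentially the same as the paper's: prove the inductive step by exhibiting the shifted contingency trajectory as a feasible candidate in branch~(c), argue that the freshly computed reference contingencies at $k+1$ coincide (index-shifted) with the optimal FTOCP contingencies from step $k$, and invoke Lemma~\ref{The:CollisionAvoidance}(b) to guarantee the $2\rho$ separation needed for~(\ref{Equ:FTOCP}h). Your treatment of the horizon logic is in fact more explicit than the paper's; the only piece you omit is the degenerate case $\tilde{N}_{k+1}^{(m)}=0$ (agent at rest), where branch~(c) does not exist and your formula $v_{k+1}^{(m)}(n-1)/n$ is undefined---the paper handles this separately by noting that branch~(b) with the trivial ``stay at rest'' candidate takes over.
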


\begin{proof}
	By induction, a proof is required only for the feasibility of the FTOCPs in step $k=1$. Moreover, the proof may focus on the feasibility of the FTOCP for a arbitrary agent $m\in\{1,2,\dots,M\}$. Finally, it is enough to prove the feasibility of the FTOCP for the minimal possible contingency horizon, i.e.,
	\begin{subequations}
	\begin{equation}
		\hat{N}_{2}^{(m)}=\max\left\{0\;,\;\tilde{N}_{1}^{(m)}\hspace*{-0.1cm}-\hspace*{-0.05cm}1\right\}\ef
	\end{equation}
	That is because in time step $k=1$ it suffices if only one of the FTOCPs in steps (3.a),(3.b),(3.c) of the CMC algorithm is feasible. Thus, in the following, consider the FTOCP for agent $m$ in time step $k=1$ under the assumption that
	\begin{equation}\label{Equ:ProofCtgHorizon}
		\hat{N}_{2}^{(m)}=\tilde{N}_{1}^{(m)}\hspace*{-0.1cm}-\hspace*{-0.05cm}1\ef
	\end{equation}
	\end{subequations}
	The special case where $\tilde{N}_{1}^{(m)}=0$ is discussed separately at the end of the proof.
	
	First, note that the contingency trajectory is always a feasible choice for the predicted nominal trajectory. This holds because (i) the contingency inputs (\ref{Equ:FTOCP}e) satisfy the input constraints and (ii) the contingency states satisfy the state constraints (\ref{Equ:FTOCP}g). Hence it suffices to prove the existence of a feasible contingency trajectory in time step $k=1$.

	Recall that the optimal contingency input sequence in time step $k=0$ is denoted
	\begin{equation}\label{Equ:OptimalCtgPredictionsRep}
		\mathbf{\tilde{a}}_{1|0}^{(m)\star},\;\mathbf{\tilde{a}}_{2|0}^{(m)\star},\;\dots,\;
		\mathbf{\tilde{a}}_{\tilde{N}_{1}^{(m)}|0}^{(m)\star}\ef
	\end{equation}
	The claim is the following time shift of this sequence,
	\begin{subequations}\label{Equ:ShiftedInputSeq}
		\begin{align}
			\mathbf{a}_{0|1}^{(m)}&=\mathbf{\tilde{a}}_{1|0}^{(m)\star}\qquad\text{and}\\
			\mathbf{\tilde{a}}_{i|1}^{(m)}&=\mathbf{\tilde{a}}_{i+1|0}^{(m)\star}\qquad\fa 1\leq i\leq \tilde{N}_{1}^{(m)}-1\ec
		\end{align}
	\end{subequations}
	constitutes a feasible candidate for the first input and the contingency trajectory in time step $k=1$. It is obvious that (\ref{Equ:ShiftedInputSeq}a) satisfies the input constraint (\ref{Equ:FTOCP}c). Moreover, the contingency input candidates (\ref{Equ:ShiftedInputSeq}b) satisfy condition (\ref{Equ:FTOCP}e), due to \eqref{Equ:ProofCtgHorizon}. The corresponding candidate contingency state sequence is given by
	\begin{subequations}\label{Equ:ShiftedStateSeq}
		\begin{align}
		\mathbf{\tilde{s}}_{i|1}^{(m)}&=\mathbf{\tilde{s}}_{i+1|0}^{(m)\star}\qquad\;\,\,\fa 1\leq i\leq \tilde{N}_{1}^{(m)}\ec\\
		\mathbf{\tilde{s}}_{i|1}^{(m)}&=\mathbf{\tilde{s}}_{\tilde{N}_{1}^{(m)}|1}^{(m)}\qquad\fa \tilde{N}_{1}^{(m)}\leq i \leq \tilde{N}\ef
		\end{align}
	\end{subequations}
	Hence the state constraints (\ref{Equ:FTOCP}g) are also satisfied. The critical aspect is to prove the satisfaction of the collision avoidance constraints (\ref{Equ:FTOCP}h) in time step $k=1$. Note that these collision avoidance constraints are different from those in previous the time step $k=0$.
	
	To prove satisfaction of the collision avoidance constraints, fix another agent $j$ ($j\neq m$). Consider the two FTOCPs \eqref{Equ:FTOCP} for agents $j$ and $m$ in time step $k=0$. The optimal inputs $\mathbf{a}_{0|0}^{(j)\star}$ and $\mathbf{a}_{0|0}^{(m)\star}$ are generally different from the respective contingency accelerations $\mathbf{\tilde{a}}_{0|0}^{(j)}$ and $\mathbf{\tilde{a}}_{0|0}^{(m)}$ in \eqref{Equ:ContingencyAcceleration}, which have been used to set up the collision avoidance constraints. However, the optimal contingency trajectories
	\begin{equation}
		\mathbf{\tilde{s}}_{1|0}^{(j)\star},\dots,\mathbf{\tilde{s}}_{N|0}^{(j)\star}\qquad\text{and}\qquad\mathbf{\tilde{s}}_{1|0}^{(m)\star},\dots,\mathbf{\tilde{s}}_{N|0}^{(m)\star}
	\end{equation}
	are identical to the contingency trajectories that are used to set up the new collision avoidance constraints in time step $k=1$, based on the solution shift \eqref{Equ:ShiftedInputSeq}. They satisfy the respective collision avoidance constraints, because they are part of the optimal solution of the two FTOCPs \eqref{Equ:FTOCP}. Since the collision avoidance constraints are pairwise parallel for $j$ and $m$, as illustrated in Figure \ref{Fig:ProofNoCollision}, the distances between the contingency states is upper bounded by
	\begin{equation}
		d_{i|0}^{(j,m)\star}\eqdef\left\|\mathbf{\tilde{p}}_{i|0}^{(j)\star}-\mathbf{\tilde{p}}_{i|0}^{(m)\star}\right\|\geq2\rho\quad\;\fa i=1,\dots,\tilde{N}\ef
	\end{equation}
	Therefore it is possible to set up new collision avoidance constraints between agents $j$ and $m$ in time step $k=1$. Moreover, by construction, these collision avoidance are satisfied by the candidate contingency state sequence \eqref{Equ:ShiftedStateSeq}.
	
	This concludes the proof, except for the special case where $\tilde{N}_{1}^{(m)}=0$. In this case, the agent is at rest in step $k=1$, because of \eqref{Equ:ContingencyHorizon}. The canoncially feasible case of the CMC algorithm is now (3.b), instead of (3.c), which does not exist. The contingency plan for agent $m$ is to remain at rest, due to (\ref{Equ:FTOCP}i). The rest of the proof carries over identically.
\end{proof}

\begin{remark}[Plug \& play]\label{The:PlugNPlay}
	(a) The contingency plan of each agent, as defined in Section \ref{Sec:CoCoDefinition}, only depends on its current state $\mathbf{s}_{k}^{(m)}$; more precisely, its velocity $\mathbf{v}_{k}^{(m)}$. (b) Therefore the CMC approach is naturally plug \& play. A new agent can be introduced to (or removed from) the swarm dynamically, in the following sense. If a new agent does not affect the feasibility of all existing agents upon its entry into the swarm, then the recursive feasibility of the entire swarm will be maintained. 
\end{remark}

\noindent Remark \ref{The:PlugNPlay}(b) follows immediately from Theorem \ref{The:RecFeasibility}.

\section{NUMERICAL EXAMPLES}\label{Sec:Example}

The CMC algorithm is illustrated in two numerical examples. Both examples are designed in two dimensions (coordinates $x$ and $y$), for a better graphical illustration. Apart from this, the implementation is exactly as described in the paper. 

In each example, the swarm includes five agents\linebreak $m=1,2,3,4,5$, each represented by an individual color (red, blue, green, purple, yellow). The mission for each agent $m$ is to reach a given target point $\mathbf{p}_{\mathrm{ref}}^{(m)}$. The cost function \eqref{Equ:CostFunction} is chosen in the quadratic form
\begin{subequations}
\begin{multline}\label{Equ:ExCostFunction}
	J^{(m)}\left(\cdot\right)=\sum_{i=0}^{N-1}\mathbf{a}_{i|k}^{(m)\mathrm{T}}\mathbf{R}\mathbf{a}_{i|k}^{(m)}+\mathbf{v}_{N|k}^{(m)\mathrm{T}}\mathbf{Q}\mathbf{v}_{N|k}^{(m)}+\\
	+\left(\mathbf{p}_{N|k}^{(m)}-\mathbf{p}_{\mathrm{ref}}^{(m)}\right)^{\mathrm{T}}\mathbf{S}\left(\mathbf{p}_{N|k}^{(m)}-\mathbf{p}_{\mathrm{ref}}^{(m)}\right)\ec
\end{multline}
where the weighting matrices are diagonal,
\begin{equation}\label{Equ:ExCostMatrices}
	\mathbf{R}=
	\begin{bmatrix}
	  R\;\;0\\ 0\;\;R
	\end{bmatrix}\ec\quad
	\mathbf{Q}=
	\begin{bmatrix}
	  Q\;\;0\\ 0\;\;Q
	\end{bmatrix}\ec\quad
	\mathbf{S}=
	\begin{bmatrix}
	  S\;\;0\\ 0\;\;S
	\end{bmatrix}\ef
\end{equation}
\end{subequations}
A full list of selected parameters is provided in Table \ref{Tab:NumExVariables}. The source code for all examples is available here:
\begin{center}
	\small\texttt{https://github.com/GSchildbach/CMC}
\end{center}

\renewcommand{\arraystretch}{1.5}
\begin{table}[H]
	\centering
	\begin{tabular}{c|c}
	\textbf{variable} & \textbf{value}\\\hline\hline
	$M$ & $5$\\\hline
	$\rho$ & $1\,\mathrm{m}$\\\hline
	$\Delta t$ & $0.2\,\mathrm{s}$\\\hline
	$\bar{v}$ & $3\,\frac{\mathrm{m}}{\mathrm{s}}$\\\hline
	$\bar{a}$ & $3\,\frac{\mathrm{m}}{\mathrm{s}^2}$
	\end{tabular}\hspace*{1cm}
	\begin{tabular}{c|c}
	\textbf{variable} & \textbf{value}\\\hline\hline
	$N$ & $12$\\\hline
	$\tilde{N}$ & $5$\\\hline
	$R$ & $1$\\\hline
	$S$ & $20$\\\hline
	$Q$ & $2$
	\end{tabular}
	\caption{List of parameters for all simulations.\label{Tab:NumExVariables}}
\end{table}
\renewcommand{\arraystretch}{1.0}

\subsection{Randomized Target Points (RTP)}

In the Randomized Target Points (RTP) example, all agents operate jointly on a rectangular $20\,\mathrm{m}\times 20\,\mathrm{m}$ area, corresponding to the position constraint set $\mathbb{S}^{(m)}$. The agents are spawned uniformly at random, and so are the sets of target points. Each set of target points is valid for $20\,\mathrm{s}$. After that, a new set of target points is assigned to the agents. Four sets of target points are generated in total. This leads to a simulation time of $80\,\mathrm{s}$.

Plots of the agent trajectories can be found in Figure \ref{Fig:RTPTrajectories}. For a better illustration, a video of the RTP example is available under the following link:
\begin{center}
	\small\texttt{https://youtube.com/shorts/VouqKlKyhNM}
\end{center}

\begin{figure}[H]
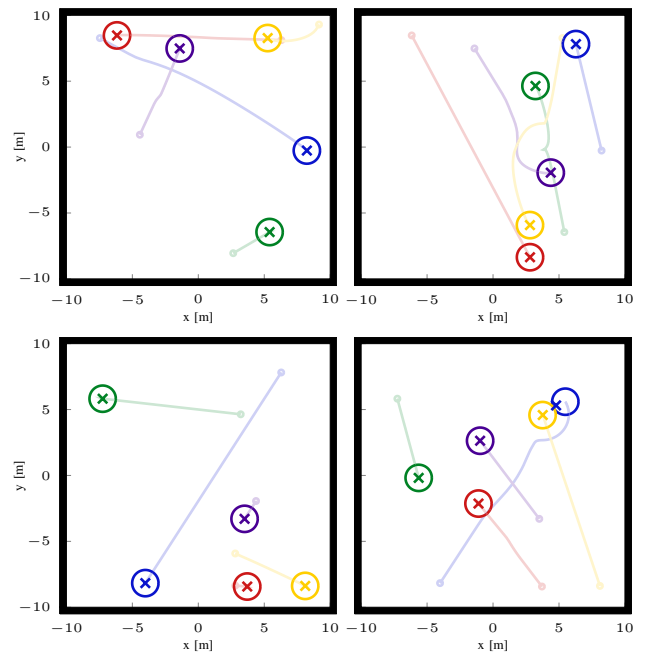

	\input{./fig/expRTP/fig01.tex}\hspace*{0.7cm}
	\input{./fig/expRTP/fig02.tex}\vspace*{-12.1cm}\\
	\input{./fig/expRTP/fig03.tex}\hspace*{0.7cm}
	\input{./fig/expRTP/fig04.tex}\vspace*{-12.3cm}
	\caption{RTP example: Overview of agent trajectories. The timeline of the plots is top left ($0\,\mathrm{s}\;\text{to}\;20\,\mathrm{s}$), top right ($20\,\mathrm{s}\;\text{to}\;40\,\mathrm{s}$), bottom left ($40\,\mathrm{s}\;\text{to}\;60\,\mathrm{s}$), bottom right ($60\,\mathrm{s}\;\text{to}\;80\,\mathrm{s}$). Agents are shown as dark circles, target points as dark crosses, trajectories as light lines, and initial conditions as small light circles.\label{Fig:RTPTrajectories}}
\end{figure}

In the last plot on the bottom right, the blue agent does not completely reach its target point. A collision avoidance constraint prevents a closer tracking, since the distance between the target points of the blue and yellow agents is less than $2\rho$. The agent who reaches its target point earlier (here: the yellow agent) has the smaller tracking error, because there is no communication.

Closed-loop collision avoidance is evaluated in Figure \ref{Fig:RTPDistances}. It shows the pairwise distance between all agents over the total simulation time. Note that an overlap between the agents is not prevented physically, by the simulation model. Instead, collisions (i.e., overlaps between the agents) are avoided if and only if the agents maintain a mutual distance of at least $2\rho$ at all times, based on the CMC algorithm.

\begin{figure}[t]
	\centering
    \includegraphics[scale=1]{./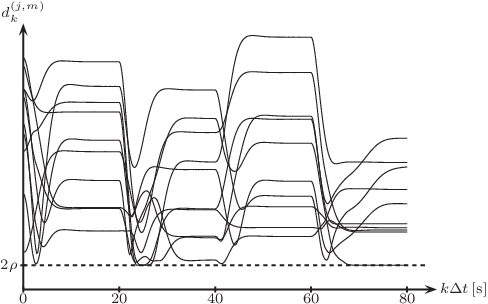} \vspace*{-0.1cm}	
    \caption{RTP example: mutual distances between all pairs of agents $m$ and $j$.\label{Fig:RTPDistances}}
\end{figure}

\subsection{Bottleneck (BTN)}

In the Randomized Target Points (RTP) example, all agents operate jointly on a rectangular $60\,\mathrm{m}\times 20\,\mathrm{m}$ area. A bottleneck separates the random initial conditions on the left and the target positions, in reverse order, on the right. The rectangular boundaries and the bottleneck represent the position constraint set $\mathbb{S}^{(m)}$. The total simulation time is $30\,\mathrm{s}$.

Plots of the agent trajectories can be found in Figure \ref{Fig:BTNTrajectories}. For a better illustration, a video of the BTN example is available under the following link:
\begin{center}
	\small\texttt{https://youtube.com/shorts/cMxOp4Oldgg}
\end{center}

\begin{figure}[H]
	\vspace*{-7.2cm}
	\input{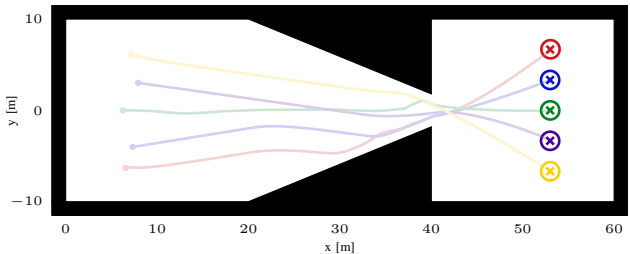}\vspace*{-4.4cm}
	\caption{BTN example: Overview of agent trajectories. Agents are shown as dark circles, target points as dark crosses, trajectories as light lines, and initial conditions as small light circles.\label{Fig:BTNTrajectories}}
\end{figure}

Closed-loop collision avoidance is evaluated in Figure \ref{Fig:BTNDistances}, showing again the pairwise distance between all agents over the total simulation time. Again, the agents manage to maintain a mutual distance of at least $2\rho$ at all times, based on the CMC algorithm.

\begin{figure}[b]
	\centering
    \includegraphics[scale=1]{./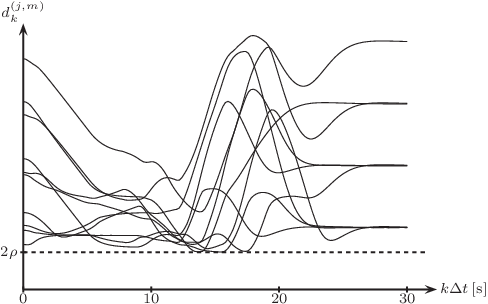} \vspace*{-0.1cm}	
    \caption{BTN example: mutual distances between all pairs of agents $m$ and $j$.\label{Fig:BTNDistances}}
\end{figure}

\subsection{Discussion}

The CMC algorithm operates smoothly and leads to an overall swarm behavior that appears reasonable and intuitive. The decentralized collision avoidance works without any observable flaws. In particular, the agent maneuvers do not seem overly conservative. As expected, no feasibility issues have occurred. The cost function has been tuned in an ad hoc fashion. It could be modified for an optimized tracking. Further experiments may also include agents with completely different objectives, i.e., other than reaching target points.

Occasionally, in both examples, deadlocks have occurred. This means the progress of an agent is blocked by other agent(s), possibly together with the state constraints. Deadlocks are a common phenomenon in decentralized approaches. They are promoted by certain obstacle geometries and symmetries in the problem. This why the initial positions in the BTN example have been randomized. Deadlocks may be reduced, or resolved, by additional rules that may be included in the CMC algorithm.

\section{CONCLUSION}

Contingency Model-based Control (CMC) is an approach for cooperative collision avoidance among a swarm of robots. Instead of communication, it is based on set of consensual rules among the robots, where a contingency trajectory is used to construct collision avoidance constraints. Formal guarantees of recursive feasibility and collision avoidance in closed-loop operation can be established. In practice, the CMC algorithm appears to operate smoothly, without excessive conservatism, in two small numerical examples.

In this paper, the CMC algorithm is presented in its most elementary form. Many questions related to CMC remain to be addressed future research. In particular, the algorithm should also work for systems with general linear, and possibly even nonlinear, dynamics. It should also extend to the case heterogenous agents, at least if the agent type can be detected by onboard sensors. Future work may also explore the integration of measurement noise, process noise, model uncertainty and non-synchronized clocks. Modifications of the CMC algorithm may be considered in order to reduce, or resolve, deadlocks---or, more generally, to improve its closed-loop behavior. As part of this, also the use of alternative contingency strategies may be explored.


\bibliographystyle{IEEEtran}
\bibliography{bibmath,bibcontr,bibeng,bibecon}


\end{document}